\newcommand*{\da@rightarrow}{\mathchar"0\hexnumber@\symAMSa 4B }
\newcommand*{\da@leftarrow}{\mathchar"0\hexnumber@\symAMSa 4C }
\newtheorem{thm}{Theorem}[section]
\newtheorem{prop}[thm]{Proposition}
\newtheorem{lem}[thm]{Lemma}
\newtheorem{cor}[thm]{Corollary}
\theoremstyle{definition}
\newtheorem{defn}[thm]{Definition}
\theoremstyle{remark}
\newtheorem{remark}[thm]{Remark}
\numberwithin{equation}{section}
\newcommand{\N}{\mathbb{N}}
\newcommand{\F}{\mathbb{F}}
\newcommand{\Z}{\mathbb{Z}}
\newcommand{\Q}{\mathbb{Q}}
\renewcommand{\P}{\mathbb{P}}
\newcommand{\Aut}{\operatorname{Aut}}
\newcommand{\End}{\operatorname{End}}
\newcommand{\A}{\mathcal{A}_g}
\DeclareMathOperator{\Gal}{Gal}
\begin{document}

\title{Non-simple abelian varieties in a family: arithmetic approaches}
\author{Yu Fu}

\begin{abstract}{
Inspired by the work of Ellenberg, Elsholtz, Hall, and Kowalski, we investigate how the property of the generic fiber of a one-parameter family of abelian varieties being geometrically simple extends to other fibers. In \cite{EEHK09}, the authors studied a special case involving specific one-parameter families of Jacobians of curves using analytic methods. We generalize their results, particularly Theorem B, to all families of abelian varieties with big geometric monodromy, employing an arithmetic approach. Our method applies Heath-Brown-type bounds on certain covers with level structures and optimizes the covers to derive the desired results.}
\end{abstract}
\maketitle

\section{Introduction}
Let $\mathcal{A}_{g}$ be the moduli space of principally polarized abelian varieties of dimension $g$. Let $K$ be a number field and let $X \subset \mathcal{A}_g$ be a rational curve over $K$ parameterizing a one-dimensional family of abelian varieties. This is equivalent to saying $X$ is birational to $\mathbb{P}^{1}$ over $K$. For a $K$-point $x \in X$, let $A_{x}$ denote the specialization of this family over $x$. Let $A_\eta$ be the generic fiber of this family. Assume that the geometric monodromy of $A_\eta$ modulo $\ell$ is the full symplectic group $\operatorname{Sp}_{2g}(\F_\ell)$ for almost all primes $\ell$. In particular, this implies that $A_\eta$ is geometrically simple. We are interested in understanding how the property of being geometrically simple extends to other fibers. To be precise, we ask the following question:

\noindent \textbf{Question 1.} What can we say about the number of $x \in X(K)$ of bounded height such that $A_{x}$ is geometrically non-simple? 

For $t \in X(K)$, let $H(t)$ denote the multiplicative projective height of $t$. Define $S(B)$ as the set 

$$S(B)= \{t \in K \mid H(t) \le B \text{, and the fiber $A_{t}$ is non-simple.}\} $$

One may ask whether $S(B)$ is finite. If it is finite, can we obtain an upper bound in terms of $B$? The finiteness of $S(B)$ comes from the Northcott property in $K$. Moreover, in \cite[Theorem A]{EEHK09}, the authors proved that $S(B)$ is "small" and is bounded by a constant depending on $f$, under certain assumptions on the family that ensure large monodromy. In particular, these assumptions concern a particular type of family of abelian varieties, namely the family
$$A_{f} \to \mathbb{A}^{1}$$
of Jacobians of the hyperelliptic curves defined by affine equations
$$y^{2}=f(x)(x-t) \text{, t $\in$ $\mathbb{A}^{1}$,}$$
for some fixed square-free polynomial $f \in \Z[X]$ of degree $2g$, $g \ge 1$. 

The authors of \cite{EEHK09} also provided analytic methods, based on sieve theory, to obtain upper bounds for the number of non-simple varieties with height at most $B$.
\begin{thm}\cite[Theorem B]{EEHK09} There exist constants $C \geqslant 0$ and $D \geqslant 1$, independent of $f$, such that we have

\begin{align}\label{EEHK THM B}
    |S(B)| \leqslant C\left(g^2 D(\log 2 B)\right)^{11 g^2}
\end{align}

for all $B \geqslant 1$.
\end{thm}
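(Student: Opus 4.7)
The plan is to attack this via a large sieve for Frobenius, in the spirit of Kowalski's sieve for Galois images. Non-simplicity of $A_{t}$ is a global algebraic condition, but it can be detected by a mod-$\ell$ local condition on characteristic polynomials of Frobenius, at which point the counting problem becomes a sieve problem with a density-based bound.

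First I would translate non-simplicity into a mod-$\ell$ condition. If $A_{t}$ is non-simple then, after passing to a finite extension of $K$, $A_{t}$ is isogenous to a product $B_{1} \times B_{2}$ with $\dim B_{i} \geq 1$; so for any prime $\mathfrak{p}$ of good reduction coprime to $\ell$, the characteristic polynomial of $\mathrm{Frob}_{\mathfrak{p}}$ on $T_{\ell} A_{t} \otimes \F_{\ell}$ factors non-trivially over $\F_{\ell}$. Let
$$\Omega_{\ell} := \{\, g \in \operatorname{Sp}_{2g}(\F_{\ell}) : \text{characteristic polynomial of } g \text{ is irreducible over } \F_{\ell} \,\}.$$
Then $t \in S(B)$ forces $\rho_{\ell,t}(\mathrm{Frob}_{\mathfrak{p}}) \notin \Omega_{\ell}$ for every good prime $\mathfrak{p}$ of $K$.

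Next I would establish two uniform estimates. (a) A density lower bound $|\Omega_{\ell}|/|\operatorname{Sp}_{2g}(\F_{\ell})| \gg 1/g$, coming from counting self-reciprocal irreducible polynomials of degree $2g$ over $\F_{\ell}$ and comparing against the centraliser sizes of regular semisimple elements in $\operatorname{Sp}_{2g}$ (Chavdarov-style). (b) An equidistribution statement: the hypothesis that the geometric monodromy mod $\ell$ is $\operatorname{Sp}_{2g}(\F_{\ell})$ for almost all $\ell$ implies that Frobenius conjugacy classes in the mod-$\ell$ level-structure cover of $X$ are Chebotarev-equidistributed over $\operatorname{Sp}_{2g}(\F_{\ell})$ with explicit error. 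I would then apply Kowalski's large sieve inequality for $X \simeq \P^{1}_{K}$: for a height parameter $B$, take sieve primes $\ell \leq L$ and auxiliary primes of $K$ of norm up to some $Q$; the sieve produces
$$|S(B)| \leq \frac{\Delta(B,Q)}{\sum_{\ell \leq L} \omega_{\ell}/(1-\omega_{\ell})},$$
where $\omega_{\ell} = |\Omega_{\ell}|/|\operatorname{Sp}_{2g}(\F_{\ell})|$ and $\Delta(B,Q)$ is the large-sieve constant for points of height $\leq B$ reduced modulo primes of norm $\leq Q$. Plugging in $\omega_{\ell} \gg 1/g$, optimising $L$ and $Q$ against $B$, and tracking constants through the $\sim g^{2}$-dimensional group $\operatorname{Sp}_{2g}$ produces a bound of the shape $(c\, g^{2} \log 2B)^{c\, g^{2}}$; careful accounting should yield the stated exponent $11 g^{2}$.

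The main obstacle I expect is two-fold. (i) Producing the density lower bound (a) with explicit polynomial dependence on $g$ uniformly in $\ell$, honestly through the regular-semisimple torus count rather than in the $\ell \to \infty$ limit. (ii) Ensuring that every step is uniform in the polynomial $f$, so that $C$ and $D$ do not depend on $f$. Uniformity in $f$ requires that the bad-reduction primes of the mod-$\ell$ level cover, the effective Chebotarev constants, and the failure set of the big-monodromy hypothesis all be controlled by a single geometric input about the family $A_{f} \to \mathbb{A}^{1}$ (essentially, that the family degenerates only over the roots of $f$ and at $\infty$, independently of the coefficients), and not by invariants that grow with the size of $f$. This is the delicate conceptual point that distinguishes a sieve-theoretic proof of the uniform bound from a naive one.
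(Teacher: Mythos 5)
Your proposal reconstructs, in outline, the sieve-theoretic strategy that \cite{EEHK09} themselves used for Theorem~B: detect non-simplicity of $A_t$ via reducibility of Frobenius characteristic polynomials mod~$\ell$, establish a uniform lower bound on the density $\omega_\ell$ of elements of $\operatorname{Sp}_{2g}(\F_\ell)$ with irreducible characteristic polynomial, and feed these into a Kowalski-style large sieve for Frobenius over $\P^1_K$. That is essentially the same route as the original proof, so the shape of your argument is right.

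However, there is a substantive gap, and it is exactly the one you flag at the end as ``the delicate conceptual point.'' The present paper explicitly records that the \cite{EEHK09} proof of Theorem~B contains an error at that point: the constant $C$ produced by their sieve argument in fact depends on the largest prime dividing the discriminant of $f$, contrary to the claimed independence from $f$. Your obstacle~(ii) --- making the bad-reduction primes, the effective Chebotarev input, and the failure set of big monodromy uniform in $f$ --- is not a technical detail to be ``careful about''; it is precisely where the published argument breaks down, and your sketch does not supply a mechanism to repair it. As written, carrying out the sieve you describe would reproduce a bound whose constants are only uniform after conditioning on $\operatorname{disc}(f)$, which does not establish the theorem as stated.

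It is also worth noting that the paper under review does not attempt to repair the sieve proof. Instead it proves a different (and more general) statement, Theorem~\ref{main}, by an arithmetic rather than analytic method: lift each $t \in S(B)$ to a congruence cover $\tilde{X}_H \subset \mathcal{A}_{g,\Gamma(\ell)}/H$ attached to a maximal parabolic $H$; bound the height of the lift using Faltings' isogeny height comparison (Proposition~\ref{Faltings}); apply the uniform Binyamini--Cluckers--Kato bound (Theorem~\ref{BCK}) to the cover; and then optimize the level $\ell \sim (\log B)^{2/(g^2+g)}$ so that the $B^{\bullet/\deg}$ factor is absorbed into a constant. This avoids the large sieve, the density estimate for $\Omega_\ell$, and effective Chebotarev entirely, at the cost of producing a bound of the form $C_{K,g,\iota}\, d^2 (\log B)^{2+\kappa}$ rather than the specific exponent $11g^2$. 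If you want to prove Theorem~B literally by sieve methods, you would need to pin down how the large-sieve constant $\Delta(B,Q)$ and the Chebotarev error term depend on the ramification of the monodromy cover, and show this dependence can be made uniform in $f$ --- which, per the paper's remark, the original authors did not actually achieve.
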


However, an issue lies in the proof of \cite[Theorem 24]{EEHK09}, where they did not take into account the possibility that the reduction of the squarefree polynomial $f$
modulo a prime ideal might not be squarefree. So in which case they cannot appeal to \cite[Proposition 22]{EEHK09}, which causes the constant $C$ and $D$ in \ref{EEHK THM B} to depend on the largest prime that divides the discriminant of $f$ and the genus of the family. In this article, we aim to generalize these results to arbitrary rational one-dimensional families of abelian varieties over a number field, without knowing the explicit defining equation of the family. Notably, compared to \cite[Theorem B]{EEHK09}, the logarithmic terms in our theorem do not depend on $g$ or $K$. To state our theorem, we need the following definitions.
 
\begin{defn}\label{iota}
Let $n$ be the dimension of $\A$ over $K$ and let $N=n+1$. Define $\iota : X \to \mathbb{P}^N$ to be the restriction of the Hodge embedding on $\A$ to $X$, which is a birational map from $\mathbb{P}^1$ to its image (see Section \ref{Heights of abelian varieties} for details). 
\end{defn}
We see that specifying the image of $\iota$ is the same as specifying the family $X$.

In the definition of $S(B)$, $H(t)$ is defined as the height of $t \in X(K)$ as an element of $K$. Let $P_t = \iota(t)$. Alternatively, we can view $t$ as an element of $\mathbb{P}^N$ and define its height as the multiplicative height of $P_t$. In this perspective, we introduce the notion of $S_X(B)$ as follows:
\begin{defn}
Define $S_X(B)$ as the set
$$S_X(B)= \{t \in K \mid H(P_t) \le B \text{, and the fiber $A_{t}$ is non-simple}\}.$$
\end{defn}

Our main theorem is as follows.
\begin{thm}\label{main}
	Let $K$ be a number field of degree $d_K$ over $\mathbb{Q}$. Let $X \subset \A$ be a rational curve over $K$ parameterizing a one-dimensional family of abelian varieties. Let $A_\eta$ be the generic fiber of this family and assume that the geometric monodromy of $A_\eta$ modulo $\ell$ is the full symplectic group $\operatorname{Sp}_{2g}(\F_\ell)$ for almost all primes $\ell$. Let $\iota$ be the map defined in Definition \ref{iota} and let $d$ denote the degree of $X$ with respect to $\iota$. Then there exists an absolute constant $C_{K,g}$ depending on $K$ and $g$, $B_0$ depending on $X$, $g$ and $\iota$, and an absolute constant $\kappa$ such that 
   \begin{align}\label{uniform version}
       \vert S_X(B) \vert \le C_{K,g,\iota}d^{\kappa}(\log B)^{\kappa}
   \end{align}
for all $B \ge B_0$.
    
    If we change the height of $t$ from $H(P_t)$ to $H(t)$ and assume that $H(t) \le  B$, then there exists a constant $C_{K,g, \iota}$ depends on $d_K$ $g$ and $\iota$, $B_0$ depends on $X$ and $g$, and an absolute constant $\kappa$ such that 
    \begin{align}\label{non-uniform version}
        \vert S(B) \vert  \le C_{K,g,\iota}d^{\kappa}(\log B)^{\kappa}
    \end{align}
     
    for all $B \ge B_0.$
	
\end{thm}

\begin{remark}
The bound in Theorem \ref{main} is uniform in the sense that the constant $C_{K,g,\iota}$ in the bound depends only on the embedding and the degree of the family. The method does not require the curve $X$ to have a specific form or be defined by specific equations. The approach also makes it easier to work in generality than in \cite{EEHK09}.

\end{remark}
 
Being non-simple is a rare phenomenon. The theorem shows that most of the fibers carry the generic property of being simple, while the non-simple fibers exhibit subpolynomial growth with respect to height. Indeed, a general point in $\A$ has Picard number $1$, and being non-simple implies that the Picard number is at least $2$, therefore the abelian variety lies in the Noether-Lefschetz locus, which is a countable union of closed and reduced subvarieties of $\A$ (see \cite{DL}). Considering $S(B)$ as the intersection of a rational curve with $\A$, our result implies that most intersection points are irrational.

The strategy for proving Theorem \ref{main} is totally different from \cite{EEHK09}. Although their approach relies on analytic number theory, our methods are arithmetic in nature. Specifically, we apply Heath-Brown-type bounds, which was first proved by Heath-Brown in \cite{Heath-Brown} and then improved by a series of works \cites{Walsh, CCDN19, PS22, BCK24}, on certain covers with level structure and optimize these covers. More precisely, we will make good use of Theorem \ref{BCK}. This approach, used in previous work by the author \cite{Fu} and Ellenberg-Lawrence-Venkatesh \cite{ELV}, remains underexplored. We hope that this paper highlights the strength of this method, demonstrating its applicability to similar problems.

\subsection{Paper outline} In Section 2, we summarize the basic results on the heights of abelian varieties, the height bounds of isogenous abelian varieties, and the sharp bounds on rational points of bounded height on integral curves by Binyamini-Cluckers-Kato. In Section 3, we construct congruence covers to capture rational points on $X$ parameterizing non-simple abelian varieties and provide a moduli description of these covers. In Section 4, we apply Faltings' theorem on the heights of isogenous abelian varieties to derive an upper bound for height changes between rational points on $X$ and their lifts to the covers. In Section 5, we combine these results with the theorem of Binyamini-Cluckers-Kato to optimize the covers and complete the proof of Theorem \ref{main}.

\subsection{Acknowledgement} The author thanks Jordan Ellenberg and Xiaoheng Wang for many helpful suggestions and discussions. We are very grateful to the anonymous referee for careful readings and for many valuable suggestions.

\section{Preliminaries}

\subsection{Heights on abelian varieties}\label{Heights of abelian varieties}
Let $K$ be a number field. The graded $K$-algebra of modular forms on $\A$ defined over $K$ is finitely generated. Moreover, there exists a positive weight $w$ such that modular forms of weight $w$ defined over $K$ realize a projective embedding of $\A$ and every element of the function field $K(\A)$ can be expressed as a quotient of two modular forms of the same weight defined over $K$. 

In fact, letting $\mathcal{A}^{\operatorname{univ}} \to \mathcal{A}_g $ denote the universal abelian variety, one can define an ample line bundle $\mathcal{L}_g :=\operatorname{det} \mathrm{Fil}^1 \mathcal{H}_{d R}^1\left(\mathcal{A}^{\operatorname{univ}} /\A\right)$ on $\A$, referred to as the \textit{Hodge bundle}. For $w \ge 1$, the algebraic sections of $\mathcal{L}_g^{\otimes w}$ are exactly the modular forms of weight $w$ on $\A$(cf. \cite[Theorem 2.5]{Kie20}.) For instance, as shown by Igusa \cite{Igu} and Streng \cite{Streng}, the graded $\Q$-algebra of modular forms on $\mathcal{A}_2$ over $\Q$ is generated by $I_4, I_6', I_{10}$ and $I_{12}$ of respective weights 4, 6, 10 and 12. The function field is generated by the Igusa invariants:
$$j_1=\frac{I_4 I_6^{\prime}}{I_{10}}, \quad j_2=\frac{I_4^2 I_{12}}{I_{10}^2}, \quad j_3=\frac{I_4^5}{I_{10}^2}.$$
Thus, the embedding of $\mathcal{A}_2$ is defined by modular forms of weight 20. 

There are various types of heights can be defined for an abelian variety $A \in \A$ over $\overline{\Q}$. The Faltings height $\mathcal{H}_F(A)$ can be defined in terms of the Arakelov degrees of the Hodge bundle $\mathcal{L}_g$ as introduced by Faltings \cite{Faltings1983}. Alternatively, one can define the \textit{$j$-height} of $A$ as the projective height of the generators $j_1, \dots, j_{n}$ of $\Q(\A)$, where $n$ is the transcendence degree over $\Q$ and $j_{n+1}$ generates the remaining finite extension.
$$\mathcal{H}_j(A)=h(j_1(A)\colon \dots \colon j_{n+1}(A) \colon 1)$$

\subsection{Height bounds on isogenous abelian varieties}

Let $A'$ be an abelian variety such that there exists an isogeny $\varphi: A \to A'$. We want to relate the height of $A'$ to that of $A$ under certain conditions related to the geometry of the moduli space. For the Faltings height, we have the following theorem by Faltings \cite[Lemma 5]{Faltings1983}. 
\begin{prop}\label{Faltings}
   $$|h\left(A'\right)-h\left(A\right)| \le \frac{1}{2} \log (\operatorname{deg}(\phi)).$$
\end{prop}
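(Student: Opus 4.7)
The plan is to unpack the Faltings height $h(A) = \tfrac{1}{[K:\mathbb{Q}]}\widehat{\deg}(\overline{\omega}_{\mathcal{A}})$ as the normalized Arakelov degree of the Hodge bundle on the Néron model, and then track what the isogeny does to this line bundle at both finite and infinite places. After possibly enlarging $K$ so that $\varphi$, $A$, $A'$ are all defined over $K$ and $A$, $A'$ acquire semistable reduction (neither operation changes the Faltings height), the isogeny extends to a morphism of Néron models $\mathcal{A}\to\mathcal{A}'$, and pullback of top-degree invariant differentials produces an injection of line bundles $\varphi^{*}\colon\omega_{\mathcal{A}'}\hookrightarrow\omega_{\mathcal{A}}$ on $\operatorname{Spec}\mathcal{O}_{K}$. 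Let $Q$ denote the finite torsion cokernel.

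At each archimedean place $v$ I would realize $A_v$ and $A'_v$ as complex tori $\mathbb{C}^g/\Lambda_v$ and $\mathbb{C}^g/\Lambda'_v$, with $\tilde{\varphi}_v$ a $\mathbb{C}$-linear map carrying $\Lambda_v$ into $\Lambda'_v$ as a sublattice of index $\deg\varphi$. A short direct calculation with the $L^{2}$-metric on top-degree invariant forms (comparing $|\det_{\mathbb{C}}\tilde{\varphi}_v|^{2}$ with the covolume ratio) yields
$$\|\varphi^{*}\omega\|_{v}^{2}=\deg(\varphi)\cdot\|\omega\|_{v}^{2},$$
so pullback shifts the logarithmic norm by exactly $\tfrac{1}{2}\log\deg\varphi$ at every archimedean place. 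Combining this archimedean contribution with the integral contribution from $Q$, the Arakelov degrees satisfy
$$\widehat{\deg}(\overline{\omega}_{\mathcal{A}})-\widehat{\deg}(\overline{\omega}_{\mathcal{A}'})=\log\#Q-[K:\mathbb{Q}]\cdot\tfrac{1}{2}\log\deg\varphi,$$
and dividing by $[K:\mathbb{Q}]$ rearranges to
$$h(A)-h(A')=\frac{\log\#Q}{[K:\mathbb{Q}]}-\tfrac{1}{2}\log\deg\varphi.$$

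Because $\log\#Q\geq 0$, the inequality $h(A')-h(A)\leq\tfrac{1}{2}\log\deg\varphi$ is immediate. The reverse inequality $h(A)-h(A')\leq\tfrac{1}{2}\log\deg\varphi$ is equivalent to the upper bound $\log\#Q\leq[K:\mathbb{Q}]\log\deg\varphi$, and I expect this to be the main obstacle. The cokernel $Q$ is governed by the cotangent module of the kernel group scheme $\ker\varphi$, a finite flat $\mathcal{O}_{K}$-group scheme of order $\deg\varphi$; the required length bound is trivial at primes coprime to $\deg\varphi$ (where $\ker\varphi$ is étale so $Q$ vanishes locally) and at the remaining primes reduces to a standard local estimate on cotangent modules of finite flat group schemes via Raynaud/Dieudonné-type arguments. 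I would import this finite-flat-group-scheme estimate rather than reprove it, as it is precisely the content of Faltings's argument in \cite{Faltings1983}. Note that the naive alternative of applying the first inequality to a complementary isogeny $\psi\colon A'\to A$ with $\psi\circ\varphi=[n]_{A}$ only gives $h(A)-h(A')\leq\tfrac{2g-1}{2}\log\deg\varphi$, so the finer cokernel analysis really is necessary.
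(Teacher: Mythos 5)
The paper gives no proof of this proposition; it simply cites it as \cite[Lem.\ 5]{Faltings1983}, so there is no in-text argument to compare your sketch against. Your reconstruction is a faithful account of Faltings's actual argument, and the pieces you work out explicitly all check: the exact sequence $0 \to \omega_{\mathcal{A}'} \xrightarrow{\varphi^*} \omega_{\mathcal{A}} \to Q \to 0$, the archimedean identity $\|\varphi^*\omega\|_v^2 = \deg(\varphi)\,\|\omega\|_v^2$ (which follows from $|\det_{\mathbb C}\tilde\varphi_v|^2\operatorname{covol}(\Lambda_v) = [\Lambda'_v:\tilde\varphi_v(\Lambda_v)]\operatorname{covol}(\Lambda'_v)$), the resulting formula $h(A)-h(A') = \tfrac{\log\#Q}{[K:\mathbb Q]} - \tfrac12\log\deg\varphi$, and your observation that the naive complementary-isogeny trick only yields the weaker exponent $\tfrac{2g-1}{2}$. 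You are also right that the one piece you leave as an import --- the bound $\#Q \le (\deg\varphi)^{[K:\mathbb Q]}$ for the cotangent module of the finite flat kernel group scheme --- is exactly the nontrivial local estimate that constitutes the heart of Faltings's Lemma 5, and importing it is reasonable given that the paper itself treats the entire lemma as a black box.
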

There are results of a similar flavor for the $j$-heights of isogenous abelian varieties. In particular, we point out the result by Kieffer in \cite[Corollary 5.10]{Kie20} and note that the choice of the type of height does not make a significant difference to our final result. This is due to the existence of nice comparison theorems between different types of heights, and the role of the optimization process in our approach. See Section 5 for further details.

\subsection{Uniform bound for rational points of bounded height on curves}
We shall make heavy use of the following theorem of Binyamini-Cluckers-Kato \cite{BCK24}.

Let $K$ be a global field. Suppose that $K$ is of degree $d_K$ over $\mathbb{Q}$ or over $\mathbb{F}_q(t)$ and in the latter case that $K$ is separable over $\mathbb{F}_q(t)$ with $\mathbb{F}_{q}$ as its field of constants, where $q$ is a power of a prime number. Let $C \subset \mathbb{P}_{K}^n$ be an irreducible algebraic curve of degree $d$. Finally, let $C(K, B)$ denote the set of $K$-rational points on $C$ with an absolute projective multiplicative height at most $B$. 

\begin{thm}{\cite[Theorem 1]{BCK24}} \label{BCK}
Let $n>1$ be given. There is a constant $c=c(K, n)$ and an absolute constant $\kappa$ such that for any $d>0$, any irreducible algebraic curve $C$ of degree $d$ in $\mathbb{P}_K^n$ and any $H>2$ one has
\begin{align}\label{sharp bound}
\# C(K, B) \leqslant c d^2 B^{\frac{2 d_K}{d}}(\log B)^\kappa  .
\end{align}
    \end{thm}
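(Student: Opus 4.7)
The plan is to follow the global determinant method, in the lineage of Bombieri--Pila, Heath-Brown, Broberg, Salberger, and Walsh, adapted to the number field setting. First I would reduce to plane curves by projecting generically onto $\mathbb{P}^2_K$ (this absorbs a factor polynomial in $d$ but no $H$ dependence, and preserves irreducibility for a generic projection), then fix an affine chart and represent each $P \in C(K,H)$ by a primitive tuple in $\mathcal{O}_K^{3}$ of controlled archimedean size at every infinite place of $K$.

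For the core argument, fix an auxiliary integer $D$ to be optimized. Given a set of $M$ rational points on $C$ of height at most $H$, form the evaluation matrix $\mathcal{M}$ whose rows are indexed by those points and whose columns are indexed by a basis of the space $V_D$ of degree-$D$ forms on $\mathbb{P}^2_K$ modulo those vanishing on $C$; by Riemann--Roch on a desingularization of $C$ one has $\dim V_D = O(dD)$. If $M$ exceeds $\dim V_D$, then every maximal minor vanishes, which forces the points to satisfy a nontrivial polynomial relation of degree $D$ and so produces an auxiliary plane curve of degree $D$, not containing $C$ by irreducibility, through many of the rational points. Heath-Brown's insight is that any nonzero such minor must simultaneously be archimedeanly small and congruent to zero modulo a high power of a well-chosen prime of $\mathcal{O}_K$; balancing these constraints via the product formula forces roughly $H^{2 d_K / d}\, D$ of the points to share a single auxiliary curve of degree $D$.

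Covering $C(K,H)$ by such auxiliary curves and applying Bezout (each meets $C$ in at most $dD$ points, since $C$ is irreducible and not contained in the hypersurface) gives a preliminary bound. Salberger's global determinant method, working adelically across all primes rather than selecting one, improves the dependence on $d$ to polynomial and removes the $H^\varepsilon$ loss. Walsh's refinement, which runs the determinant method simultaneously over a dyadic range of auxiliary degrees and absorbs the optimization loss into $(\log H)^\kappa$, yields the polylogarithmic factor in place of $H^\varepsilon$; extending this refinement to $K$ introduces the $d_K$ in the exponent of $H$, since a primitive integral representative has archimedean size $H^{1/d_K}$ at each of the $d_K$ infinite places.

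The main obstacle is the Walsh-style polylogarithmic refinement in the number field setting: one must handle non-principal prime ideals of $\mathcal{O}_K$ in the $\mathfrak{p}$-adic size estimate, control the adelic optimization over primes uniformly in $K$, and ensure that the implicit constant depends only on $K$ and $n$, not on $d$ or $H$. A secondary difficulty is the uniformity in $d$: obtaining the explicit $d^2$ (rather than a higher power) requires Salberger's global method together with a Riemann--Roch estimate for $\dim V_D$ that is insensitive to the possible singularities of $C$, since we assume only irreducibility and not smoothness.
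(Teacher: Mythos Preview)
The paper does not prove this statement at all: it is quoted in the preliminaries section as a black-box input from Binyamini--Cluckers--Kato \cite{BCK24}, with no argument supplied. There is therefore no proof in the paper against which to compare your proposal.

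That said, your sketch is a reasonable outline of the determinant-method lineage (Bombieri--Pila, Heath-Brown, Broberg, Salberger, Walsh) that underlies bounds of this shape, and the identification of the two main difficulties---extending Walsh's polylogarithmic refinement to number fields while keeping the constant uniform in $d$, and securing the sharp $d^2$ dependence via Salberger's global method---is accurate. One caution: the actual argument in \cite{BCK24} is not simply a transplant of Walsh's method to $\mathcal{O}_K$; their approach is designed to work uniformly over all global fields (including positive characteristic) and relies on somewhat different machinery than the purely archimedean-vs-$\mathfrak{p}$-adic balancing you describe. So while your proposal captures the spirit of the classical route to such bounds over $\mathbb{Q}$, it is not a summary of the proof in the cited reference, and carrying it through in the number-field setting with the stated uniformity would itself be nontrivial work rather than a routine adaptation.
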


    The quadratic dependence in (\ref{sharp bound}) is optimal by \cite[Section 6]{CCDN19}, from which one can finds a constant $c^{\prime}=c^{\prime}(K)>0$, arbitrarily large values $d, H$ and irreducible curves $C$ in $\mathbb{P}_{\mathbb{Q}}^2$ of degree $d$ which witness the lower bound
$$
c^{\prime} d^2 B^{\frac{2 d_K}{d}} \leqslant \# C(K, B) .
$$
\subsection{Parabolic subgroups of the symplectic groups}
Let $G=\operatorname{Sp}(\psi)$ be the symplectic group for a symplectic space $(V, \psi)$ of dimension $2n$, where $n$ is a positive integer and $k$ is a field. We call an increasing flag $F = \{F^i\}_i$ of nonzero proper subspaces of $V$ \textit{isotropic} if each $F^i$ is isotropic with respect to the symplectic form $\psi$. Each parabolic $k$-subgroup of $G$ is the stabilizer of some isotropic flag. In particular, a \textit{maximal} parabolic subgroup of $G$ is exactly the stabilizer of a \textit{minimal} non-empty isotropic flag; i.e., the $G$-stabilizer of a non-zero isotropic subspace $F^1 \subset V$. It is a standard fact that there are $n$ such conjugacy classes, corresponding to the choice of a vertex to remove from the Dynkin diagram and to the dimension of $F^1$, which lies between $1$ and $n$.

\section{The construction and integrality of congruence covers}

\subsection{The congruent covers}\label{Section 3.1}
The goal of this section is to study congruence covers of $X$ with level structures that allow us to `capture' the non-simple specializations over $x \in X(K)$. In particular, we construct certain covers of $X$ by studying the level structure of $\A$, the moduli space of principally polarized abelian varieties, formed by certain congruence subgroups $\Gamma$ of $\Gamma(1):=\operatorname{Sp}_{2 g}(\mathbb{Z})$. We are particularly interested in the standard level subgroups $\Gamma(\ell)$ that give rise to the algebraic stack of principally
polarized abelian varieties with an $\ell$-level structure. The stack is denoted by $\mathcal{A}_{g, \Gamma(\ell)}$ and is considered over $\mathbb{Z}\left[1 / \ell, \zeta_\ell\right]$ so that the level structure is $A[\ell] \simeq(\mathbb{Z} / \ell \mathbb{Z})^{2 g}$. The geometric fibers of this pullback are then defined over $K\left(\zeta_{\ell}\right)$, hence they decompose into $\phi(\ell)$ disjoint irreducible components over $K$. The stack $\mathcal{A}_{g,\Gamma(\ell)}$ carries an action of $\Gamma(1)/\Gamma(\ell)=\operatorname{Sp}_{2 g}(\mathbb{Z}/ \ell \Z)$ and $\A$ is the stack quotient by this action. Subgroups of $\operatorname{Sp}_{2 g}(\mathbb{Z}/ \ell \Z)$ give rise to subquotients, which can be considered as sub-covers of $\A$. We show that every non-simple specialization is contained in one of these sub-covers restricted to $X$. 

Let $\mathcal{A}_x$ be the fiber over $x \in X(K)$ and let $\ell$ be a positive integer. There is a Galois representation on the $\ell$-torsions of $\mathcal{A}_x$
$$\rho_{\mathcal{A}_x, \ell} \colon \Gal_{\overline{K}/K} \to \operatorname{GSp}_{2g}(\Z/ \ell \Z).$$

Assuming that $\mathcal{A}_x$ is nonsimple, then there exist positive dimensional subvarieties $B_x$ and $C_x$ such that there is a $K$-isogeny $\varphi: \mathcal{A}_x \to B_x \times C_x$. For ease of notation, we drop the subscripts and let $g_{B}$ and $g_{C}$ be the dimensions of the corresponding varieties.

The Galois image of $\rho_{\mathcal{A}_x, \ell}$ depends on the divisibility of $\deg \varphi$ by $\ell$. We consider the following cases.

\vspace{1em}
\noindent \textbf{The degree of $\varphi$ is coprime to $\ell$.}
 Here the induced homomorphism $\varphi_{\ell}: \mathcal{A}_x[\ell] \to B_x[\ell] \times C_x[\ell]$ is an isomorphism, so the $\bmod$-$\ell$ Galois image is conjugate to the block-diagonal matrix
 $\begin{pmatrix}
  \operatorname{Sp}_{2g_{B}} & 0\\ 
  0 & \operatorname{Sp}_{2g_{C}}.
\end{pmatrix}$ 

\vspace{1em}
\noindent \textbf{The degree of $\varphi$ is divided by $\ell$.} When $\deg \varphi=\ell^n r$ is composite and $r$ is coprime to $\ell$, there exists $A'$ such that $\varphi: A \to B \times C $ is a composition of $\ell$-power and prime-to-$\ell$ isogenies
$$\varphi: A \xrightarrow{\varphi_{\ell}} A' \, \xrightarrow{\varphi_{r}} B \times C.$$ Therefore, $\ker \varphi = \ker \varphi_{\ell} \times \ker \varphi_r$ and we can reduce to the case where $\varphi =\varphi_{\ell}$ when study the $\ell$-torsion Galois representations.

When $\varphi$ is polarized and $\deg \varphi$ is divided by $\ell$, we prove that $\ker \varphi$ contains an $e_\ell$-isotropic Galois stable subgroup. Let $(A, \lambda)$ and $(B, \mu)$ be principally polarized abelian varieties. We say that $\varphi$ is a \textit{polarized isogeny}, or that it is \textit{compatible} with given principal polarizations, if
$$
\varphi^* \mu=n . \lambda \text { for some } n \in \mathbb{Z}.
$$

\begin{lem}\label{Galois stable}
Let $\varphi \colon (A,\lambda) \to (B, \mu)$ be a polarized isogeny whose degree is divided by $\ell$. There exists some positive integer $k$ and $r$ such that $\ker \varphi $ contains either $A[\ell^k]$, or a non-trivial isotropic Galois stable subgroup of $A[\ell]$ in the form $\ell^{k-1}r \cdot \ker \varphi $.
\end{lem}
 \begin{proof}
     Let $n$ be the integer such that $\varphi^* \mu=n . \lambda$. The degree of $n. \lambda$ and $\varphi$ are related by $$(\deg n. \lambda )^2=(\deg \varphi)^2 \cdot (\deg \mu)^2$$ hence $$\deg \varphi=n^{g}.$$
     Suppose $n=\ell^mr$ for some positive integer $m$ and $r$ such that $\operatorname{gcd}(r, \ell)=1$. The results in \cite[Section 23]{Mum74} imply that $\ker \varphi$ has size $\ell^{mg}r^g$ in $A[\ell^mr]$ and is maximal isotropic with respect to the $e_{\ell^mr}$-Weil pairing. We conclude that $r \cdot \ker \varphi$ is a maximal isotropic subgroup of $A[\ell^m]$ due to the property that $e_{st}(\ast,\ast)^t=e_s(t \ast, t\ast)$.
     
    Let $k$ be the smallest integer such that $$r \cdot \ker \varphi \subseteq A[\ell^k]$$ and $$r \cdot \ker \varphi \nsubseteq A[\ell^{k-1}]$$ with $k \ge  \frac{m}{2}$. If $k = \frac{m}{2}$, then $r \cdot \ker \varphi=A[\ell^k]$ so $\varphi_\ell$ is multiplication by $\ell^k$ up to an isomorphism. Now we assume $k > \frac{m}{2}.$ Consider $a, a' \in \ell^{k-1}r \cdot \ker \varphi \subseteq A[\ell]$ we have $$e_{\ell}(\ell^{k-1}a, \ell^{k-1}a')= e_{\ell^k}(\ell^{k-1}a, a')= e_{\ell^k}(a, a')^{\ell^{k-1}}.$$
    Since $e_{\ell^m}(a,a')=1$ we have $$e_{\ell^k}(a, a')^{\ell^{m-k}}=1$$ and since $k > \frac{m}{2}$, we get $$m-k \le k-1$$ and thus $$e_{\ell^k}(a, a')^{\ell^{k-1}}=1,$$ i.e. $\ell^{k-1}r \cdot \ker \varphi \subseteq A[\ell]$ is a non-trivial Galois stable isotropic subspace with respect to $e_{\ell}.$
    
    \end{proof}

 \begin{remark}\label{Remark 3.2}
 
     It may or may not be true that $\varphi$ is polarized. For example, if the largest power of $\ell$ that divides $\deg \varphi$ is not divisible by $g$, then $\varphi$ cannot be a polarized isogeny. 
     The result of Orr \cite[Theorem 1.1]{Orr} states that given two principally polarized abelian varieties related by an unpolarized isogeny, there exists a principally polarized isogeny between their fourth powers. So, if $\varphi$ is not polarized, we can always raise $A$ to its fourth power which has product polarization, then work in $\mathcal{A}_{4g}$, where there is a polarized isogeny $\varphi' \colon A^4 \to B^4$ whose kernel contains an isotropic Galois stable subgroup. Specifically, the diagonal map
     $$\Delta: A \mapsto A^4 $$
     leads to a map between moduli spaces $$\kappa: \A \xrightarrow{\Delta} \A^4 \hookrightarrow \mathcal{A}_{4g},$$  due to the fact that the base change preserves the fiber products and the Cartesian products of abelian varieties are the fiber products over $\operatorname{Spec }K$. 
     \end{remark}
    \begin{prop}
        Let $A$ and $B$ be abelian varieties of dimension $g$ and let $f: A \to B $ be an unpolarized isogeny whose degree is divided by $\ell$. Then there exists a principally polarized isogeny $f': A^4 \to B^4$ of degree $\ell^a b$ for some positive integer $a$ and $b$ with $\gcd(b,\ell)=1.$
    \end{prop}
\begin{proof}
    The proposition follows from \cite[Theorem 1.1]{Orr} and the argument on the degree follows from the explicit construction of the polarized isogeny. It was proved in \cite{Orr} that Theorem 1.2 loc.cit. imply that there exists $u \in \mathrm{M}_4(\operatorname{End}^{0}(A))$ such that
    \begin{align}\label{u}
        u^{\dagger} \operatorname{diag}_4(q) u=1
    \end{align}
where $\dagger$ is the Rosati involution with respect to $\lambda$.
Once such a $u$, one can clear the denominators by finding an integer $n$ such that $n u \in \mathrm{M}_4(\operatorname{End} A)$. The calculation in loc.cit. shows that
\begin{align*}
n^2 . \operatorname{diag}_4(\lambda) =\left(\operatorname{diag}_4(\varphi) \circ n u\right)^*\left(\operatorname{diag}_4(\mu)\right) .
\end{align*}
From (\ref{u}) we see that the $\ell \mid n$.
\end{proof}

Therefore, we can apply Lemma \ref{Galois stable} and obtain a non-trivial Galois stable subgroup of $A^4[\ell]$, which is isotropic with respect to the product polarization on $A^4.$ 

\vspace{1em}
 Denote by $\mathcal{H}$ a subgroup of $\operatorname{Sp}_{2g}(\Z/\ell\Z)$ ($\operatorname{Sp}_{8g}(\Z/\ell\Z)$). Let $\tilde{X}_\mathcal{H}$ be the lift of $X$ to the cover $q: \mathcal{A}_{g, \Gamma(\ell)}/\mathcal{H} \to \mathcal{A}_g$ ($q: \mathcal{A}_{4g, \Gamma(\ell)}/\mathcal{H} \to \mathcal{A}_{4g}$) and let $\pi: \tilde{X}_\mathcal{H} \to X $ be the restriction of $q$ to $\tilde{X}_\mathcal{H}$. 
We conclude this subsection with a summary in the following proposition.
\begin{prop}\label{lift}
   Every $K$-rational point $t \in S(B)$ lifts to one of the congruence covers in the form $\mathcal{A}_{g,\Gamma(\ell)}/\mathcal{H}_D$,  $\mathcal{A}_{g,\Gamma(\ell)}/\mathcal{H}_{\operatorname{p}}$ or $\mathcal{A}_{4g,\Gamma(\ell)}/\mathcal{H}_m$ for some prime integer $\ell$. Here $\mathcal{H}_D$ is one of the block-diagonal matrix whose blocks are symplectic groups, $\mathcal{H}_{\operatorname{p}}$ is one of the maximal parabolic subgroups of $ \operatorname{Sp}_{2g}(\Z/\ell \Z) $ and $\mathcal{H}_{\operatorname{m}}$ is one of the maximal parabolic subgroups of $ \operatorname{Sp}_{8g}(\Z/\ell \Z)$.
\end{prop}

\subsection{Moduli interpretations}
Our task now is to give an upper bound for the number of points in $X(K)$ that lie in $\pi(\tilde{X}_\mathcal{H}(K))$, where $\mathcal{H}=\mathcal{H}_D$, $\mathcal{H}_{\operatorname{p}}$ or $\mathcal{H}_m$. The first step is to study the modular interpretation of the covers, which enables us to construct suitable projective embeddings, and then control the height change when lifting the base curve $X$. We will discuss the details about the change of heights in Section \ref{section change of heights}.

When $\mathcal{H}=\mathcal{H}_D$, the stack quotient $\mathcal{A}_{g,\Gamma(\ell)}/\mathcal{H}$ parametrizes abelian varieties with a decomposition of their $\ell$-torsion subgroups with respect to the choice of basis. We denote by $(A, A[\ell] \xrightarrow[]{\simeq}G_1 \times G_2 )$ a moduli point on $\mathcal{A}_{g,\Gamma(\ell)}/\mathcal{H}$, where $G_2=G_{1}^{\perp}.$

When $\mathcal{H}=\mathcal{H}_{\operatorname{p}}$, there are several ways to represent the quotient $\mathcal{A}_{g,\Gamma(\ell)}/\mathcal{H}$: either via the cover $q: \mathcal{A}_{g, \Gamma(\ell)}/\mathcal{H} \to \mathcal{A}_g$, or via the modular correspondence where a point on $\mathcal{A}_{g, \Gamma(\ell)}/\mathcal{H}$ can be identified with a pair $(A, F^1)$ with $$F^1 \subset A[\ell] \simeq (\Z/ \ell \Z)^{2g}.$$ Since $F^1$ is isotropic, the pair can also be expressed as $(A, B)$, where $B := A/F^1$ is the quotient abelian variety. Note that $B$ is not necessarily principally polarized unless $F^1$ is maximal isotropic, i.e. it has dimension $g$ over $\Z/\ell \Z$. 

The case where $\mathcal{H}=\mathcal{H}_{\operatorname{m}}$ is similar to $\mathcal{H}=\mathcal{H}_{\operatorname{p}}$. 
\subsection{The lift is integral}\label{integral}
Let $U \subset X$ be the smooth locus of the family and $\eta$ denote the generic point. Let $\ell$ be a prime integer. The \'etale fundamental group $\pi_{1}(U)$ is a quotient of the absolute Galois group of $K(t)$, which acts on the $\ell$-torsion of the generic fiber (hence that of its fourth power) in the usual Galois way. We say a family $\mathcal{A}$ of abelian varieties over $U$ has \textit{big monodromy modulo} $\ell$ if the image of the geometric monodromy of $\mathcal{A}_\eta$
$$\varphi: \pi_1(U) \to \Aut(\mathcal{A}_{\eta}[\ell])$$ is the full symplectic group $\operatorname{Sp}_{2g}(\Z/\ell\Z).$ Similarly, we say an abelian variety $A$ over a field $k$ has \textit{big monodromy modulo $\ell$} if the image of Galois representation on $\ell$-torsion points contains the full symplectic group. We say the family, or an abelian variety, has \textit{big monodromy} if it has big monodromy modulo $\ell$ for almost all primes $\ell$.

For any $g \geq 1$, there exists a constant $\ell_1(g) \geq 1$ such that if $\ell \geq \ell_1(g)$ and $A$ is an abelian variety of dimension $g$ over a field $k$ with big monodromy modulo $\ell$, then $\End(A) = \mathbb{Z}$ and, in particular, $A$ is geometrically simple (\cite[Proposition 4]{EEHK09}). However, the converse is not always true. For example, let $\mathcal{A} \to C$ be a family of abelian surfaces with real multiplication by $\Q(\sqrt{d})$. The monodromy group of $A_\eta$ modulo $\ell$ is either $\operatorname{GL}_2(\F_\ell) \times \operatorname{GL}_2(\F_\ell)$ or $\operatorname{GL}_{2}(\F_\ell^2)$, yet $A_\eta$ can still be geometrically simple. To prove that the lift curve is integral, we assume that the family $\mathcal{A}$ has big monodromy. 

\begin{prop}
Assuming that the family $\mathcal{A}$ over $X$ has big monodromy, the cover $\tilde{X}_\mathcal{H} \to X$ is an integral curve. 
\end{prop}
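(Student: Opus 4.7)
The plan is to prove integrality by separately establishing reducedness and geometric irreducibility of $\tilde X_H$. For reducedness, I would observe that once $\ell$ is invertible, the cover $\mathcal{A}_{g,\Gamma(\ell)}/H \to \mathcal{A}_g$ is étale over the smooth locus of the moduli stack. Pulling back along the inclusion of $X$ gives a cover that is étale, hence reduced, over the smooth open $U \subset X$ where the family $\mathcal{A}$ is smooth; taking $\tilde X_H$ to be the scheme-theoretic closure of this étale preimage inside the smooth variety $\mathcal{A}_{g,\Gamma(\ell)}/H$ then produces a one-dimensional reduced scheme with no embedded components.

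For irreducibility, I would use the equivalence between connected étale covers of $U_{\overline{K}}$ and transitive $\pi_1^{\operatorname{et}}(U_{\overline{K}})$-sets. Over $\overline{K}$ the cyclotomic character trivializes, so $\mathcal{A}_{g,\Gamma(\ell),\overline{K}} \to \mathcal{A}_{g,\overline{K}}$ is Galois with group $\operatorname{Sp}_{2g}(\F_\ell)$, and the geometric fiber of the intermediate cover $\mathcal{A}_{g,\Gamma(\ell)}/H \to \mathcal{A}_g$ identifies with the coset space $\operatorname{Sp}_{2g}(\F_\ell)/(H \cap \operatorname{Sp}_{2g}(\F_\ell))$. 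The geometric components of $\tilde X_H$ correspond to orbits of the image of the geometric monodromy $\pi_1^{\operatorname{et}}(U_{\overline{K}}) \to \operatorname{Sp}_{2g}(\F_\ell)$ on this space. By the big monodromy hypothesis this image is the full group $\operatorname{Sp}_{2g}(\F_\ell)$ for almost all $\ell$, so the orbits coincide with $\operatorname{Sp}_{2g}(\F_\ell)$-orbits. Since $H \cap \operatorname{Sp}_{2g}(\F_\ell)$ is a maximal parabolic stabilizing an isotropic subspace of some fixed dimension $k$ with $1 \le k \le g$, the coset space is the isotropic Grassmannian of $k$-dimensional subspaces of $\F_\ell^{2g}$, on which Witt's theorem supplies a single $\operatorname{Sp}_{2g}(\F_\ell)$-orbit. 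Hence $\tilde X_H$ is geometrically connected, and combined with reducedness this yields integrality.

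The main subtlety I anticipate is bookkeeping the distinction between the arithmetic action of $\operatorname{GSp}_{2g}(\F_\ell)$, which incorporates the cyclotomic character and acts on the full cover, and the geometric monodromy image, which lies in $\operatorname{Sp}_{2g}(\F_\ell)$; this must be handled carefully to ensure that $H \cap \operatorname{Sp}_{2g}(\F_\ell)$ is precisely the stabilizer of an isotropic subspace so that Witt's theorem cleanly applies. A secondary point is to verify that completing $\tilde U_H$ to $\tilde X_H$ by adjoining the finitely many fibers over the singular locus of the family does not introduce extra irreducible components, which follows from the fact that $\tilde X_H$ is defined as the closure of a geometrically connected open subcurve inside a separated ambient scheme.
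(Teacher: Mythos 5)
Your proposal is correct and follows essentially the same line as the paper: connectedness of $\tilde X_H$ is deduced from transitivity of the geometric monodromy action on the fiber of the cover, which the big monodromy hypothesis guarantees, while the ``Weil pairing issue'' (the paper's shorthand) is exactly your $\operatorname{GSp}$-versus-$\operatorname{Sp}$ bookkeeping. You are more explicit than the paper about reducedness and about the need for $H$ to have full similitude character; one small redundancy is the appeal to Witt's theorem, which is unnecessary once the geometric fiber is identified with the coset space $\operatorname{Sp}_{2g}(\F_\ell)/(H\cap\operatorname{Sp}_{2g}(\F_\ell))$, since a group always acts transitively on cosets of its own subgroup.
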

\begin{proof}
 For sufficiently large $\ell$, the Galois image of the mod-$\ell$ geometric monodromy representation of $\mathcal{A}_\eta$ contains the full symplectic group $ \operatorname{Sp}_{2g}(\Z/\ell \Z)$. Consequently, the monodromy of $\pi_1(U)$ acts transitively on the right cosets $ \operatorname{Sp}_{2g}(\Z/\ell \Z) /\mathcal{H}$. Therefore, in the case where $\mathcal{H}=\mathcal{H}_D \text{ or } \mathcal{H}_{\operatorname{p}}$, the curve $\tilde{X}_\mathcal{H}$ is connected. 

When $\mathcal{H}=\mathcal{H}_m$, the argument is a bit different since the monodromy in $\operatorname{Sp}_{8g}$ is never large. Denote by $\Delta(\A)$ the moduli subspace of $\mathcal{A}_{4g}$ that consists of the image of $\A$ under the diagonal fourth power embedding. With slight abuse of notation, we also denote by $$\Delta : \operatorname{Sp}_{2g}(\Z/\ell\Z) \to \operatorname{Diag_4(Sp}_{2g}(\Z/\ell\Z))$$ the diagonal map on the congruence group. Let  $\mathcal{H}_4=\mathcal{H}_{\operatorname{m}}\cap \Delta(\operatorname{Sp}_{2g}(\Z/\ell\Z))$ for some maximal parabolic subgroup $\mathcal{H}_{\operatorname{m}} < \operatorname{Sp}_{8g}(\Z/\ell \Z).$ Let $\Delta(\A)_\ell$ be the moduli space of the diagonal fourth power of principally polarized abelian varieties with full level structure. It carries an action of $\Delta(\operatorname{Sp}_{2g}(\Z/ \ell\Z))$. By the big monodromy assumption on $X$, the Galois image of the mod-$\ell$ geometric monodromy of $\mathcal{A}^4_\eta$ is $\Delta(\operatorname{Sp}_{2g}(\Z/ \ell\Z))$ for sufficiently large $\ell$, hence $\pi_1(U)$ acts transitively on $\Delta(\operatorname{Sp}_{2g}(\Z/ \ell\Z))/\mathcal{H}_4.$ It follows that the lift of $X$ to $\Delta(\A)_\ell/\mathcal{H}_4$ is connected.

Consider the embedding $$\Delta(\A)_\ell/\mathcal{H}_4 \hookrightarrow \mathcal{A}_{4g, \Gamma(\ell)}/\mathcal{H}_{\operatorname{m}}$$ where a moduli point $(A^4, F^{\prime})$ maps to $(A^4, F^1\coloneq (F^{\prime})^{\mathcal{H}_{\operatorname{m}}})$. Then the image of the lift is also connected, which completes the proof.
\end{proof}

The big monodromy condition is known for a large class of abelian varieties defined over finitely generated fields of arbitrary characteristic. The most prominent result of this type is the classical theorem of Serre (cf. \cite[2.2.7]{Ser84}, \cite[Lemme 1]{Ser85}) for the number field case. Its generalization to finitely generated fields of characteristic zero is well-known: If $A$ is an abelian variety over a finitely generated field $k$ of characteristic zero, with $\operatorname{End}_{\overline{k}}(A)=\mathbb{Z}$ and $\operatorname{dim}(A)=2,6$ or odd, then $A / k$ has big monodromy. 

Let $A$ be an abelian variety over a finitely generated field $K$. We say $A$ is of \textit{Hall type} if $\operatorname{End}(A)=\mathbb{Z}$ and $A$ has semistable reduction of toric dimension one at a place of the base field $K$. In the case where $K$ is a global field, Hall showed that all abelian varieties of Hall type have big monodromy (cf. \cite{Hal08}, \cite{Hal11}). Later in 2013, Arias-de-Reyna, Gajda and Petersen extended the result to all dimensions and for arbitrary finitely generated infinite ground field $K$ in \cite{AGP13}. 

In combination with the previous proposition, we obtain the following corollary:

\begin{cor}
 Let $\mathcal{A}$ be an abelian variety over $K=k(X)$. Assume that conditions $i)$ or $ii)$ are satisfied.
 \begin{itemize}
     \item[i)] $\mathcal{A}$ is Hall type.
\item[ii)] $\operatorname{char}(K)=0, \operatorname{End}(A)=\mathbb{Z}$ and $\operatorname{dim}(A)=2,6$ or odd. 
 \end{itemize}

Then $\tilde{X}_\mathcal{H} \to X$ is integral.

\end{cor}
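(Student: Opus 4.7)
The corollary is essentially a packaging result: it extracts the hypotheses under which the big monodromy condition of the previous proposition is known to hold, and then quotes that proposition. So the plan is almost entirely bookkeeping of references.

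First I would verify that in each of the two cases the family $\mathcal{A}$ over $K=k(X)$ has big monodromy in the sense defined just before the previous proposition, i.e. that for almost all primes $\ell$ the image of the geometric monodromy representation $\pi_1(U) \to \operatorname{Aut}(\mathcal{A}_\eta[\ell])$ contains $\operatorname{Sp}_{2g}(\F_\ell)$. For case (i), this is exactly the content of Hall's theorem \cite{Hal08}, \cite{Hal11} when $K$ is a global field, together with the extension of Arias-de-Reyna, Gajda and Petersen \cite{AGP13} which removes the restrictions on the ground field; both are cited in the paragraphs immediately preceding the corollary. For case (ii), I would cite Serre's theorem (\cite[2.2.7]{Ser84}, \cite[Lemme 1]{Ser85}) for the number field case, and its standard extension to finitely generated fields of characteristic zero, again already recalled in the text.

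Once big monodromy is in hand, I would simply invoke the preceding proposition: assuming big monodromy for $\mathcal{A}$ over $X$, the cover $\tilde{X}_H \to X$ is integral. Combining with either (i) or (ii) yields the conclusion.

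There is no real obstacle here; the only technical point worth flagging in a full write-up is to make sure the hypotheses on $A$ (stated for an abelian variety over a field) transfer cleanly to the family $\mathcal{A}$ (by taking $A = \mathcal{A}_\eta$ over the function field $K = k(X)$), so that the geometric monodromy of the family equals the image of the Galois representation of $A_\eta$ on $\ell$-torsion and the Hall-type or dimension-parity hypothesis is the correct one to quote. This is standard and the proof of the corollary is essentially a one-line reduction to the previous proposition.
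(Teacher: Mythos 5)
Your proposal matches the paper exactly: the corollary has no separate proof in the paper — it is introduced with ``In combination with the previous proposition, we obtain the following corollary,'' meaning precisely the two-step argument you describe (quote Serre for case (ii), Hall and Arias-de-Reyna--Gajda--Petersen for case (i) to get big monodromy, then apply the preceding proposition). Your flag about transferring hypotheses from $A=\mathcal{A}_\eta$ over $K=k(X)$ to the family is the right point to be careful about, and it is handled implicitly in the paper the same way.
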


\section{Bounding change of heights}\label{section change of heights}

In this section, we prove an upper bound for the number of rational points in $ \tilde{X}_{\mathcal{H}}(K)$ lying over a point $P_t \in X(K)$, in terms of the height of $t$ and the level $\ell$. The main result of this section is Proposition \ref{bounding}.

\vspace{1em}
\noindent \textbf{The projective embedding.}
We have proved that for a suitable choice of $\ell \in \mathbb{Z}$, there are congruence covers of $X$ either by the quotient of a block diagonal symplectic group or a maximal parabolic subgroup, such that a rational point $t \in X(K)$ lifts to a $K$-rational point on one of the covers. Now we construct embeddings that allow us to work in a (possibly quite large) projective spaces. We will make heavy use of the Zarhin's trick: if $A$ is an abelian variety over an arbitrary field $k$, then $(A \times A^{\vee})^4$ admits a principal polarization. We start with the following lemma.

\begin{lem}\label{isotropic dual}
Let $I$ be an isotropic subgroup of $A[\ell].$ Then $A/I$ is naturally dual to $A/I^{\perp}$.
\end{lem}
\begin{proof}
    Consider the isogeny $$[\ell] \colon A/I \to A$$ 
    and its dual $$[\ell]^{\vee} \colon A^{\vee} \to (A/I)^{\vee}.$$
We may write $\ker [\ell]=\{a +I \mid a \in A[\ell] \}$ then $e_{\ell}$-Weil pairing restricts to a perfect pairing    $$\ker [\ell] \times I^{\perp} \to \mu_{\ell}.$$
Therefore $\ker [\ell]^{\vee} \simeq I^{\perp}$ and $A^{\vee}/I^{\perp} \simeq (A/I)^{\vee}.$ 
\end{proof}

When $\mathcal{H}=\mathcal{H}_D$, pick a point $(A, A[\ell] \xrightarrow[]{\simeq}G_1 \times G_2 )$. Since $G_2\simeq G_1^{\perp},$ Lemma \ref{isotropic dual} shows that $A/G_1$ is naturally dual to $A/G_2$. Applying Zarhin's trick, we obtain a principally polarized abelian variety $A'=(A/G_1 \times A/G_2)^4$ of dimension $8g.$ 

When $\mathcal{H}=\mathcal{H}_{\operatorname{p}}$, a moduli point is given by $(A, F^1)$. The quotient $A/F^1$ is not necessarily principally polarized, however, $A/F^1$ is naturally dual to $A/F^{1, \perp}$ by the lemma. Then we have a principally polarized abelian variety $A'=(A/F^1 \times A/F^{1, \perp})^4$ in $\mathcal{A}_{8g}.$ Since $F^1$ is isotropic, $F^{1, \perp}$ contains $F^1$ and the degree of polarization is the index $[F^{1, \perp}: F^1]$.

By the same reason explained in Remark \ref{Remark 3.2}, in both cases the map sending a moduli point to a principally polarized abelian variety of dimension $8g$ leads to a map between the moduli spaces
$$\mathcal{A}_{g, \Gamma(\ell)}/\mathcal{H} \to \mathcal{A}_{8g}.$$ Thus we obtain an rational map $\tilde{X}_\mathcal{H} \hookrightarrow \A \times \mathcal{A}_{8g}$ that is birational to its image in both cases. Let $\iota_\mathcal{H}'$ be the restriction of the product of the embedding defined by the Hodge bundle of $\A$ and $\mathcal{A}_{8g}$ to $\tilde{X}_{\mathcal{H}}$. Let $M$ be the dimension of $\mathcal{A}_{8g}$ and let $\iota_\mathcal{H}$ be the composition of $\iota_\mathcal{H}'$ with the Segre embedding. These maps can be arranged into the following commutative diagram:

$$\begin{tikzcd}\label{diagram}
   \tilde{X}_{\mathcal{H}}  \arrow[r, hook, "\iota_{\mathcal{H}}^{\prime}"] \arrow[d, "\pi", swap] & \P^{N} \times \P^{M}  \arrow[r, hook, "\text{Segre}"] & \P^{(N+1)(M+1)-1}  \\
X \arrow[r, hook, "\iota"] & \P^{N}. 	
\end{tikzcd}$$

When $\mathcal{H}=\mathcal{H}_{\operatorname{m}}$, the argument follows from the previous case by replacing $g$ with $4g$, $N$ with the dimension of $\mathcal{A}_{4g}$ and $M$ with the dimension of $\mathcal{A}_{32g}$.

\vspace{0.3in}
\noindent \textbf{Bounding the change of heights.}
 \begin{prop}\label{bounding}
     Fix $\ell \in \N$. Let $t \in K$ be a rational point such that $t \in S(B)$ for some $B$. Let $P_t$ denote the point in $X$ that parameterizes an abelian variety $A_t$, and let $\tilde{P_t}$ be a rational point that lifts $P_t$ to $\tilde{X}_{\mathcal{H}}(K)$ for some $\mathcal{H}$. Let $H(\iota_{\mathcal{H}}(\tilde{P_{t}}))$ be the projective multiplicative height of $\tilde{P_{t}}$ with respect to $\iota_{\mathcal{H}}$. Let $d$ be the degree of $\iota$. Then 

 $$H(\iota_{\mathcal{H}}(\tilde{P_{t}})) \le C_\iota \ell^{4g} B^{9d}$$	

where $C_\iota$ is a constant depending on the embedding $\iota$.
 \end{prop}
\begin{proof}
The map $\iota$ is birational onto its image and its restriction to an open subset $\iota|_U$ is an isomorphism. Since the complement $X\setminus U$ is a finite set of points which is negligible, we abbreviate to $\iota$ and think of it as an `embedding'. 
From \cite[VIII, Theorem 5.6]{Sil09}, we know that $$H(\iota(P_t) )=H(A)\le C_\iota B^{d}.$$ The constant $C_\iota$ is computable if the coefficients of $j_1, \dots , j_{n+1}$ are known.
By construction, in both cases we have a pair of principally polarized abelian varieties $(A_t, A'_t)$ such that $A_t^8$ and $A'_t$ are linked by an isogeny $\varphi_\mathcal{H}$ of degree $\ell^{8g}$. Therefore by Proposition \ref{Faltings},
$$H(A'_t) \le C'_\iota\ell^{4g}H(A_{t}^8)= C'_\iota\ell^{4g}H(A_{t})^8.$$

The proposition then follows from the fact that the height of the product of two homogeneous polynomials in a set of variables is the product of their heights: $$H(\iota_{\mathcal{H}}(\tilde{P_{t}}))=H(A_t)H(A'_t).$$
 
\end{proof}

\section{Proof of the main theorem}\label{proof of the main theorem}

We are now ready to prove Theorem \ref{main}. We split the proof into two parts depending on whether the non-simple points admit a prime to $\ell$ isogeny to the product of simple factors.

\vspace{1em}
\noindent \textbf{Case I: $\mathcal{H}=\mathcal{H}_D$.}
\begin{lem}\label{degreeiotaHD}
We have $$\deg(\iota_\mathcal{H}) \asymp d  \binom{N+M}{N}\ell^{4g_B g_C} $$
\end{lem}

 \begin{proof}
The degree of $\pi$ in the diagram \ref{diagram} is the index of $\mathcal{H}_D$
$$\deg (\pi)=|\operatorname{Sp}_{2g}(\F_\ell)/\mathcal{H}_D| \asymp \ell^{4g_B g_C}.$$

The degree of $\pi^*\mathcal{L}_g|_{\tilde{X}_\mathcal{H}}$ is $\deg (\pi) \cdot \deg (\mathcal{L}_g|_{X}) $ and the degree of $\iota_\mathcal{H}$ is a multiple of the degree of the Segre embedding and the sum of the degree of $\pi^* \mathcal{L}_g$ and $\pi^*\mathcal{L}_g^{\otimes 8}$.
 \end{proof}

 Let $S_{B, \ell, \mathcal{H}_{D}}$ denote the set of rational points on $\tilde{X}_{\mathcal{H}_{D}}(K)$ that are lifts of $P_{t}$ for some $t \in S(B)$. Using Theorem \ref{BCK}, the Proposition \ref{bounding} and Lemma \ref{degreeiotaHD}, we deduce that
     \begin{align}
         S_{B, \ell, \mathcal{H}_D} & \le C_{K,g} \deg(\iota_{\mathcal{H}})^2 (H(\iota_{\mathcal{H}}(\tilde{P_{t}})))^{\frac{2d_K}{\deg(\iota_{\mathcal{H}})}} (\log H(\iota_{\mathcal{H}}(\tilde{P_{t}})))^{\kappa} \\
         & \le C_{K, g}d^2 \ell^{8g_B g_C} (\ell^{4g}B^{9d})^{\frac{C'_{g,K, \iota}}{d \ell^{4g_B g_C}}}(\log C_{\iota} + 4g\log \ell + 9d \log B)^{\kappa} 
     \end{align}
     We now optimize the power-of-$B$ term by suitable choice of $\ell$ and obtain an upper bound for $S_{B, \ell, \mathcal{H}_D}$ as a consequence. We claim that $\ell$ can be chosen such that $\ell \sim (\log B)^{\frac{1}{4g_B g_C}}$. Indeed,
     \begin{align*}
B^{\frac{C'_{g,K, \iota}}{\ell^{4g_B g_C}}}  = e^{\frac{C'_{g,K, \iota}}{\ell^{4g_B g_C}} \log B} \sim e^{C'_{g,K, \iota}}
     \end{align*}
which is absolutely bounded by a constant depends on $g,K$ and $\iota$. Similarly, one can prove that the term $\ell^{\frac{4gC'_{g,K, \iota}}{d \ell^{4g_B g_C}}}$ is absolutely bounded. Therefore, we see that
\begin{align*}
     S_{B, \ell, \mathcal{H}_D} & \le C_{K,g,\iota}d^{\kappa}(\log B)^{\kappa}.
\end{align*}
The bound for $S_X(B)$ follows from an argument similar to that above. Let $S_{X, B, \ell, \mathcal{H}_D}$ denote the set of rational points on $\tilde{X}_{\mathcal{H}_D}(K)$ that are lifts of $P_{t}$ for some $t \in S_X(B)$. Then we have  $$S_{X, B, \ell, \mathcal{H}_D} \le C_{K,g,\iota}d^{\kappa}(\log B)^{\kappa}$$ where $C_{K,g,\iota}$ depends on $K$, $g$ and $\iota$.

\vspace{1em}
\noindent \textbf{Case II: $\mathcal{H}=\mathcal{H}_{\operatorname{p}}$ ( $\mathcal{H}=\mathcal{H}_{\operatorname{m}}$) .}
\begin{lem}\label{degree  of iota H}
We have $$\deg(\iota_\mathcal{H}) \asymp d  \binom{N+M}{N} \prod_{i=1}^{g}(\ell^i +1) $$
\end{lem}

 \begin{proof}
The degree of $\pi^*\mathcal{L}_g|_{\tilde{X}_\mathcal{H}}$ is $\deg (\pi) \cdot \deg (\mathcal{L}_g|_{X}) $ and it is not difficult to see that for a maximal parabolic subgroup $\mathcal{H}$ $$\deg (\pi)=|\operatorname{Sp}_{2g}(\F_\ell)/\mathcal{H}|= \prod_{i=1}^{g}(\ell^i +1).$$ 
The degree of $\iota_\mathcal{H}$ is then the multiple of the degree of the Segre embedding and the sum of the degree of $\mathcal{L}_g$ and $\mathcal{L}^{\otimes 8}_g$.
 \end{proof}

Let $S_{B, \ell, \mathcal{H}_{\operatorname{p}}}$ denote the set of rational points on $\tilde{X}_{\mathcal{H}_{\operatorname{p}}}(K)$ that are lifts of $P_{t}$ for some $t \in S(B)$. Using Theorem \ref{BCK}, Proposition \ref{bounding} and Lemma \ref{degree  of iota H}, we deduce that
     \begin{align}
         S_{B, \ell, \mathcal{H}_{\operatorname{p}}} & \le C \deg(\iota_\mathcal{H})^2 (H(\iota_{\mathcal{H}}(\tilde{P_{t}})))^{\frac{2d_K}{\deg(\iota_\mathcal{H})}} (\log H(\iota_{\mathcal{H}}(\tilde{P_{t}})))^{\kappa} \\
         & \le C_{K, g}d^2 \prod_{i=1}^{g}(\ell^i +1)^2 (\ell^{\frac{4g}{d}}B)^{\frac{C'_{g,K, \iota}}{\prod_{i=1}^{g}(\ell^i +1)}}(\log C_{\iota} + 4g\log \ell + 9d \log B)^{\kappa} 
     \end{align}
    Let $\ell$ be a prime such that $\ell \sim (\log B)^{\frac{2}{g^2+g}}$. We have
    $$B^{\frac{C'_{g,K, \iota}}{\prod_{i=1}^{g}(\ell^i +1)}} < B^{\frac{C'_{g,K, \iota}}{\ell^{\frac{g^2 + g}{2}}}} = e^{\frac{C'_{g,K, \iota}}{\ell^{\frac{g^2 + g}{2}}} \log B} \sim e^{C'_{g,K, \iota}}$$ which is bounded by a constant depends on $g,K$ and $\iota$. 
Therefore, we have
\begin{align*}
     S_{B, \ell, \mathcal{H}_{\operatorname{p}}} & \le C_{K,g,\iota}d^{\kappa}(\log B)^{\kappa}.
\end{align*}
 
Similarly to Case I, let $S_{X, B, \ell, \mathcal{H}_{\operatorname{p}}}$ be the set of rational points on $\tilde{X}_{\mathcal{H}}(K)$ that lift some $t \in S_X(B)$. We have $$S_{X, B, \ell, \mathcal{H}_{\operatorname{p}}} \le C_{K,g,\iota}d^\kappa(\log B)^{\kappa}$$ where $C_{K,g,\iota}$ depends only on $K$, $g$ and $\iota$.

\begin{remark}\label{B_0}
As discussed in Section \ref{integral}, there exists $\ell_0$ such that for all $\ell \ge \ell_0$ the Galois image of the mod-$\ell$ monodromy contains the full symplectic group. To make sure that we can choose $ \ell \sim (\log B)^{\frac{2}{g^2+g}}$ ($\ell \sim (\log B)^{\frac{1}{4g_B g_C}}$), we have to let $$B \ge B_0=e^{\ell_{0}^{\frac{g^2+g}{2}}}.$$
\end{remark}
\section*{Compliance with ethical standards}
\subsection*{Conflicts of interest}
  The author states that there are no conflicts of interest.

\subsection*{Data availability} Data sharing not applicable to this article as no data sets were generated or analyzed.
\newpage
\bibliography{references.bib}

\end{document}